\newtheorem{theorem}{Theorem}
\newtheorem{lemma}{Lemma}
\newtheorem{corollary}{Corollary}
\newtheorem{remark}{Remark}
\newcommand\innerprod[2]{\langle #1,#2 \rangle}
\newcommand{\norma}[1]{|\!| #1 |\!|}
\newcommand{\eps}{\varepsilon}
\newcommand{\Rn}[1]{\mathbb{R}^{#1}}
\begin{document}

\title{Shadows of cube vertices}
\markright{shadows of cube vertices}
\author{Yossi Lonke}

\begin{abstract}
Suppose that a finite-dimensional cube is orthogonally projected onto a central section of itself by a subspace of one dimension less. Up to dimension $9$, at least one vertex is projected onto the section,
but for dimension $10$ or larger, there are orthogonal projections for which all the vertices are projected outside the section. In fact, this is the case for ``most" orthogonal projections, as the dimension tends to infinity.
\end{abstract}

\maketitle

If the vertices of a square or a three-dimensional cube are projected onto a line (respectively, plane) through its center then at least one pair of opposite vertices
is projected onto the intersection with the line (respectively, plane). \emph{What is the situation for higher dimensions?} The purpose of this note is to answer this question.

For a unit vector $u\in \Rn{n}$, let $H_u$ denote the $(n-1)$-dimensional subspace of all vectors in $\Rn{n}$ orthogonal to $u$, and $\pi_u$ the
orthogonal projection onto $H_u$. Thus for each $x\in \Rn{n}$, 

\begin{equation}\label{projection}
\pi_u(x)=x-\innerprod{x}{u}u,
\end{equation}
where $\innerprod{\cdot}{\cdot}$ is the standard inner product.  The three standard norms on $\Rn{n}$ are denoted by
$$\norma{u}_1 = \sum_{k=1}^n|u_k|,\enspace \norma{u}_2=\left(\sum_{k=1}^nu_k^2\right)^{1/2},\enspace\norma{u}_{\infty}=\max_{1\leq k\leq n}|u_k|,$$
 and the cube $[-1,1]^n$ is denoted by $C_n$.

\begin{theorem} 
Let $u\in\Rn{n}$ be a unit vector not orthogonal to any of the vertices of the cube $C_n$, and $\pi_u$ the orthogonal projection  onto $H_u$.
A necessary and sufficient condition that there exists a vertex $\eps\in C_n$ such that $\pi_u(\eps)\in H_u\cap C_n$ is  ${\norma{u}_1\norma{u}_{\infty}\leq 2}$. 
 \end{theorem}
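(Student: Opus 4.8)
The plan is to reduce the membership $\pi_u(\eps)\in C_n$ to an explicit coordinatewise inequality and then treat the two implications separately. Writing $t=\innerprod{\eps}{u}$, formula \eqref{projection} gives that the $k$-th coordinate of $\pi_u(\eps)$ is $\eps_k-tu_k$. Since $|\eps_k|=1$, multiplying inside the absolute value by $\eps_k$ shows that $|\eps_k-tu_k|\le 1$ is equivalent to $|1-t\eps_k u_k|\le 1$, i.e. to $0\le t\eps_k u_k\le 2$. So the first step is to record the reformulation: for a vertex $\eps$, one has $\pi_u(\eps)\in H_u\cap C_n$ if and only if $0\le t\eps_k u_k\le 2$ for every $k$, where $t=\innerprod{\eps}{u}$.

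For sufficiency, assuming $\norma{u}_1\norma{u}_{\infty}\le 2$, I would simply exhibit a witness: take $\eps_k=\sgn{u_k}$ when $u_k\ne 0$ and $\eps_k=1$ otherwise. Then $\eps_k u_k=|u_k|\ge 0$ for all $k$, hence $t=\sum_k|u_k|=\norma{u}_1>0$, so $t\eps_k u_k=\norma{u}_1|u_k|$, and this lies in $[0,2]$ precisely because $\norma{u}_1\norma{u}_{\infty}\le 2$. The coordinatewise condition from the first step is therefore satisfied, giving $\pi_u(\eps)\in H_u\cap C_n$.

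For necessity, suppose the condition $0\le t\eps_k u_k\le 2$ holds for some vertex $\eps$. The crux is to observe that, since each product $t\eps_k u_k$ is nonnegative, we have $t\eps_k u_k=|t\eps_k u_k|=|t|\,|u_k|$; summing over $k$ yields $t^2=\sum_k t\eps_k u_k=|t|\,\norma{u}_1$. Because $u$ is orthogonal to none of the vertices, $t\ne 0$, so $|t|=\norma{u}_1$. Substituting this into the upper bound $t\eps_k u_k=|t|\,|u_k|\le 2$ and taking the maximum over $k$ gives $\norma{u}_1\norma{u}_{\infty}\le 2$.

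The argument is short, and I do not anticipate a genuinely hard step; the one nonmechanical point is the realization that nonnegativity of all the products $t\eps_k u_k$ forces $|t|=\norma{u}_1$, which is what pins down the projection parameter and converts the box constraint into the stated inequality. The remaining care is bookkeeping with signs and the degenerate coordinates $u_k=0$, together with the role of the hypothesis that $u$ is orthogonal to no vertex: it is exactly what guarantees $t\ne 0$, and without it a vertex lying in $H_u$ would be a trivial witness even when $\norma{u}_1\norma{u}_{\infty}>2$, so the equivalence would fail.
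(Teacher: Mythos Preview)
Your proof is correct and follows essentially the same approach as the paper: both arguments show that for a vertex whose projection lands in $C_n$ the sign pattern of $\eps$ must align with that of $u$ (you via the inequality $t\eps_ku_k\ge 0$, the paper after a symmetry reduction to $u_k\ge 0$), which forces $|t|=\norma{u}_1$ and hence the bound. Your summing trick $t^2=\sum_k t\eps_ku_k=|t|\,\norma{u}_1$ is a slightly slicker packaging of the paper's coordinate computation, and your explicit witness $\eps_k=\sgn{u_k}$ for sufficiency is exactly the vertex the paper chooses.
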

 It turns out (see Lemma 1 below) that the maximal value of $\norma{u}_1\norma{u}_{\infty}$ on the unit sphere $\mathbb{S}^{n-1}$ is $\frac{\sqrt{n}+1}{2}$. Thus the
 condition of the theorem is fulfilled \emph{for all} $u\in\mathbb{S}^{n-1}$ if and only if $n\leq 9$.
 \begin{corollary} Let $n\geq 1$ be an integer. The following are equivalent:
 \begin{enumerate}
 \item For every orthogonal projection $\pi_u$ onto the $(n-1)$-dimensional subspace $H_u$ of $\Rn{n}$, there exists  a vertex $\eps$ of the cube $C_n$ such that $\pi_u(\eps)\in H_u\cap C_n$.
 \item $n\leq 9$.
 \end{enumerate}
 \end{corollary}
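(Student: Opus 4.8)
The plan is to read the Corollary off the Theorem together with Lemma~1, once one degenerate case not covered by the Theorem is handled. The Theorem assumes $u$ is not orthogonal to any vertex of $C_n$, whereas statement (1) quantifies over \emph{every} $u\in\mathbb{S}^{n-1}$. So the first step is to observe: if $u\in\mathbb{S}^{n-1}$ is orthogonal to some vertex $\eps_0$ of $C_n$, then $\innerprod{\eps_0}{u}=0$, hence $\pi_u(\eps_0)=\eps_0-\innerprod{\eps_0}{u}u=\eps_0$; since $\eps_0$ is a point of $C_n$ lying in $H_u$, we get $\pi_u(\eps_0)\in H_u\cap C_n$. Thus for such $u$ the conclusion in (1) holds trivially, and (1) is equivalent to its restriction to those $u$ that are orthogonal to no vertex --- which are exactly the vectors to which the Theorem applies.

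For $(2)\Rightarrow(1)$, assume $n\le 9$ and let $u\in\mathbb{S}^{n-1}$ be arbitrary. If $u$ is orthogonal to a vertex, we are done by the previous paragraph. Otherwise Lemma~1 gives $\norma{u}_1\norma{u}_\infty\le\frac{\sqrt n+1}{2}\le\frac{\sqrt 9+1}{2}=2$, so the sufficiency direction of the Theorem produces a vertex $\eps$ with $\pi_u(\eps)\in H_u\cap C_n$. Hence (1) holds.

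For $(1)\Rightarrow(2)$ I would argue by contraposition: given $n\ge 10$, I exhibit one $u$ for which no vertex projects into the section. Since $\sqrt n>3$, Lemma~1 gives $\max_{\mathbb{S}^{n-1}}\norma{u}_1\norma{u}_\infty=\frac{\sqrt n+1}{2}>2$, so the set $U=\{u\in\mathbb{S}^{n-1}:\norma{u}_1\norma{u}_\infty>2\}$ is nonempty; by continuity of the norms it is open, hence of positive surface measure. The set of $u\in\mathbb{S}^{n-1}$ orthogonal to some vertex is a finite union of great subspheres, and so has measure zero; therefore we may choose $u\in U$ orthogonal to no vertex. For this $u$, the necessity direction of the Theorem shows $\pi_u(\eps)\notin H_u\cap C_n$ for every vertex $\eps$, so (1) fails.

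The argument is brief and I do not expect a genuine obstacle. The only point requiring care is reconciling the scope of the Theorem (non-degenerate $u$) with that of the Corollary (all $u$): in the forward direction this is the trivial remark that a vertex lying on $H_u$ is its own shadow, and in the converse direction it is the fact that the nonempty open set $U$ cannot be exhausted by the finitely many measure-zero great subspheres --- so, in particular, there is no need to make the extremal vector of Lemma~1 explicit.
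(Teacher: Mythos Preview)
Your argument is correct and follows exactly the route the paper intends: the Corollary is read off from the Theorem together with Lemma~1, using that $\frac{\sqrt{n}+1}{2}\le 2$ if and only if $n\le 9$. You are in fact more careful than the paper, which offers only the single sentence preceding the Corollary as proof and never explicitly reconciles the Theorem's hypothesis (that $u$ be orthogonal to no vertex) with the Corollary's quantification over all $u$; your trivial fixed-vertex observation for the degenerate case and the open-set/measure-zero selection in the contrapositive direction fill precisely this gap.
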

 \begin{proof}

By symmetry, it can and will be assumed that the nonzero coordinates of $u$ are positive.
\emph{The condition is necessary}: \enspace choose a vertex $\eps=(\eps_1,\dots,\eps_n)$ for which ${\norma{\pi_u(\eps)}_{\infty}\leq 1}$, and assume $\innerprod{\eps}{u}>0$ (otherwise, replace $\eps$ by $-\eps$). Let $\{e_k\}_{k=1}^n$ denote the standard unit vectors in $\Rn{n}$. Consider the $k$th coordinate of the projection,
\begin{equation}
\innerprod{\pi_u(\eps) }{e_k}=\eps_k-\innerprod{\eps}{u}u_k.
\end{equation}
Pick an index $k$ such that $u_k>0$. If $\eps_k=-1$, then 
\begin{equation}\label{positiveSameSign}
\innerprod{\pi_u(\eps) }{e_k}=\eps_k-\innerprod{\eps}{u}u_k<-1,
\end{equation}
but $\norma{\pi_u(\eps)}_{\infty}\leq 1$, so  $u_k>0$ implies $\eps_k=1$. Hence,
 \begin{equation}\label{theCoordinates}
\innerprod{\pi_u(\eps)}{e_k}=\left\{
\begin{array}{lr}
\pm 1 & \text{if } u_k=0,\\
1-u_k\sum_{i:u_i > 0}u_i & \text{if } u_k>0.
\end{array}
\right.
\end{equation}
As $u_k\geq 0$ for all $k$, the sum of the nonzero coordinates of $u$ is just  $\norma{u}_1$,  and $\norma{u}_{\infty}=\max\{u_k: u_k>0\}$.  Thus, (\ref{theCoordinates}) implies
\begin{equation}\label{normOfProjection}
\norma{\pi_u(\eps)}_{\infty}=\max_k|\innerprod{\pi_u(\eps)}{e_k}|=\max\{1,|1-\norma{u}_{\infty}\norma{u}_1|\},
\end{equation}
and since $\norma{\pi_u(\eps)}_{\infty}\leq 1$, this implies   $\norma{u}_1\norma{u}_{\infty}\leq 2$.

\emph{The condition is also sufficient}, because given a unit vector
we can choose a vertex~$\eps$ for which $\eps_k=1$ for all $k$ such that $u_k>0$. If in addition we assume that $u$ is not orthogonal to any vertex, then (\ref{theCoordinates}), and consequently (\ref{normOfProjection}), are valid. The latter, combined with the condition $\norma{u}_1\norma{u}_{\infty}\leq 2$,  implies  that $\norma{\pi_u(\eps)}_{\infty}\leq 1$.
The proof of the theorem is complete. 
\end{proof}
The following lemma was mentioned above. Curiously, the bound of $2$ appearing in the condition of the theorem is precisely the maximum of the corresponding function for $n=9$.
\begin{lemma}
For every $n\geq 1$, the maximum of the function $f(u)= \norma{u}_1\norma{u}_{\infty}$ on the unit sphere $\mathbb{S}^{n-1}$ is $\frac{\sqrt{n}+1}{2}$. 
\end{lemma}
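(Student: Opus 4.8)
The plan is to use symmetry together with a monotonicity argument to collapse this to a single-variable extremum problem. Since $f(u)=\norma{u}_1\norma{u}_\infty$ is unchanged when each coordinate $u_k$ is replaced by $|u_k|$, and is invariant under permutations of the coordinates, I may assume that all $u_k\ge 0$ and that $u_1=\norma{u}_\infty=:t$. From $1=\sum_k u_k^2\le n u_1^2$ one obtains the key elementary bound $t\ge 1/\sqrt n$, valid for every unit vector.

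Next I would fix the value $t\in[1/\sqrt n,1]$ of the largest coordinate and optimize over the rest: this amounts to maximizing $\norma{u}_1=t+\sum_{k=2}^n u_k$ subject to $0\le u_k\le t$ and $\sum_{k=2}^n u_k^2=1-t^2$. By the Cauchy--Schwarz inequality, $\sum_{k=2}^n u_k\le\sqrt{(n-1)(1-t^2)}$, with equality exactly when $u_2=\dots=u_n=\sqrt{(1-t^2)/(n-1)}$; this common value is $\le t$ precisely because $t\ge 1/\sqrt n$, so the extremal configuration is admissible. Hence
\[
\max_{\norma{u}_\infty=t} f(u)=g(t):=t^2+t\sqrt{(n-1)(1-t^2)}.
\]

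It then remains to maximize $g$ on $[1/\sqrt n,1]$. The cleanest route is the substitution $t=\sin\phi$, which rewrites $g$ as $\tfrac12+\tfrac12\bigl(\sqrt{n-1}\,\sin 2\phi-\cos 2\phi\bigr)$; since $|a\sin x+b\cos x|\le\sqrt{a^2+b^2}$, the bracket is at most $\sqrt{(n-1)+1}=\sqrt n$, giving $g(t)\le\tfrac{\sqrt n+1}{2}$. (Alternatively, solving $g'(t)=0$ leads, after clearing radicals, to $4nt^4-4nt^2+(n-1)=0$ with admissible root $t^2=\tfrac12(1+1/\sqrt n)$.) A short check shows the maximizing angle $\phi=\tfrac\pi4+\tfrac12\arcsin(1/\sqrt n)$ lies strictly inside $(\arcsin(1/\sqrt n),\tfrac\pi2)$ for $n\ge 2$, so the bound is attained --- by the unit vector with one coordinate equal to $\sqrt{\tfrac12(1+1/\sqrt n)}$ and the other $n-1$ coordinates equal --- while for $n=1$ the statement is immediate. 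This yields the claimed maximum $\tfrac{\sqrt n+1}{2}$.

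The only genuinely delicate point is the feasibility claim in the reduction step: the Cauchy--Schwarz bound on $\sum_{k\ge 2}u_k$ is useless unless the equalizing choice of $u_2,\dots,u_n$ respects the cap $u_k\le t$, and this is exactly where the observation $\norma{u}_\infty\ge 1/\sqrt n$ is needed. Once past that, the rest is a routine one-variable maximization, and the boundary values $g(1)=g(1/\sqrt n)=1$ confirm that the maximum is the interior critical value.
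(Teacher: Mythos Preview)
Your argument is correct. The paper's own treatment is much terser: after the same symmetry reduction (nonnegative coordinates, and relabel so that the maximal coordinate comes first), it passes to the function $g(u)=u_1(u_1+\cdots+u_n)$ on $\mathbb{S}^{n-1}$ and then simply declares the remaining computation ``a standard exercise in multivariable calculus'', presumably via Lagrange multipliers. Your route diverges after the symmetry step: instead of attacking the constrained $n$-variable problem directly, you freeze $t=\norma{u}_\infty$, use Cauchy--Schwarz to collapse the remaining $n-1$ coordinates, and are left with a one-variable problem that yields cleanly to the substitution $t=\sin\phi$. What this buys you is an entirely elementary argument with no multivariable calculus at all, and an explicit description of the maximizer. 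The one place where your argument carries more content than the paper's sketch is the feasibility check $\sqrt{(1-t^2)/(n-1)}\le t$, which you correctly tie to the a priori bound $t\ge 1/\sqrt n$; this is exactly the kind of detail the paper's ``standard exercise'' glosses over.
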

\begin{proof}
For reasons of symmetry the desired maximum coincides with the maximum of the  function
$$g(u)=u_1(u_1+u_2+\cdots +u_n)$$
subject to the constraint $u\in\mathbb{S}^{n-1}$. The rest of the proof is a standard exercise in multivariable caclulus, and is therefore omitted.
\end{proof}

\begin{remark}

For sufficiently large $n$, most points $u\in \mathbb{S}^{n-1}$, in the sense of  spherical measure, do not satisfy $\norma{u}_1\norma{u}_{\infty}\leq 2$. In fact, in a sense that can be made precise using classical concentration of measure techniques, (see \cite[(2.6)]{FLM})  the ``typical" order of magnitude of  the product $\norma{u}_1\norma{u}_{\infty}$ in $\mathbb{S}^{n-1}$is $\sqrt{\log n}$. Moreover, those vectors that are orthogonal to some vertex of the cube consist of a union of $2^{n-1}$ $(n-2)$-dimensional spheres, which do not occupy any positive measure, so for sufficiently large dimensions, ``most" one-codimensional orthogonal projections carry all the cube's vertices outside the section of the cube by the subspace being projected upon. \end{remark}

yossilonke@me.com

\vfill\eject

\end{document}